\numberwithin{equation}{section}
\numberwithin{figure}{section}
  \theoremstyle{plain}
  \newtheorem*{thm*}{\protect\theoremname}
\theoremstyle{plain}
\newtheorem{thm}{\protect\theoremname}[section]
  \theoremstyle{definition}
  \newtheorem{defn}[thm]{\protect\definitionname}
  \theoremstyle{definition}
  \newtheorem{rem}[thm]{Remark}
  \theoremstyle{plain}
  \newtheorem{lem}[thm]{\protect\lemmaname}
\newcommand{\bp}{\begin{proof}}
\newcommand{\ep}{\end{proof}}
\newcommand{\bt}{\begin{thm}}
\newcommand{\et}{\end{thm}}
\newcommand{\bl}{\begin{lem}}
\newcommand{\el}{\end{lem}}
\newcommand{\bd}{\begin{defn}}
\newcommand{\ed}{\end{defn}}
\newcommand{\br}{\begin{rem}}
\newcommand{\er}{\end{rem}}
\newcommand{\be}{\begin{equation*}}
\newcommand{\ee}{\end{equation*}}
\newcommand{\co}{\operatorname{co}}
\newcommand{\diam}{\operatorname{diam}}
\newcommand{\ord}{\operatorname{ord}}
\newcommand{\shift}{\operatorname{shift}}
  \providecommand{\definitionname}{Definition}
  \providecommand{\lemmaname}{Lemma}
  \providecommand{\theoremname}{Theorem}
\providecommand{\theoremname}{Theorem}
\begin{document}

\title{Takens' embedding theorem with a continuous observable}

\author{Yonatan Gutman}
\begin{abstract}
Let $(X,T)$ be a dynamical system where $X$ is a compact metric
space and $T:X\rightarrow X$ is continuous and invertible. Assume that
the Lebesgue covering dimension of $X$ is $d$. We show that for
a generic continuous map $h:X\rightarrow[0,1]$, the $(2d+1)$-delay
observation map $x\mapsto\big(h(x),h(Tx),\ldots,h(T^{2d}x)\big)$
is an embedding of $X$ inside $[0,1]^{2d+1}$. This is a generalization
of the discrete version of the celebrated Takens embedding theorem,
as proven by Sauer, Yorke and Casdagli to the setting of a continuous
observable. In particular there is no assumption on the (lower) box-counting
dimension of $X$ which may be infinite.

\date{\today}

\end{abstract}

\keywords{Takens' embedding theorem, continuous observable, time delay observation
map, Lebesgue covering dimension, box-counting dimension. }

\subjclass[2000]{37C45, 54H20.}

\maketitle

\section{Introduction}

Assume a certain physical system, e.g., a certain experimental layout
in a laboratory, is modeled by a \textit{dynamical system} $(X,T)$
where $T:X\rightarrow X$ represents the state of the system after
a certain fixed discrete time interval has a elapsed. The possible
measurements performed by the experimentalist are modeled by bounded
real valued functions $f_{i}:X\rightarrow\mathbb{{R}}$, $i=1,\ldots K$
known as \textit{observables}. The actual measurements are performed
during a finite time at a discrete rate $t=0,1,\ldots,N$ starting
out in a finite set of initial conditions $\{x_{j}\}_{j=1}^{L}$.
Thus the measurement may be represented by the finite collection of
vectors $(f_{i}(T^{k}x_{j}))_{k=0}^{N}$, $i=1,\ldots K$, $j=1,\ldots,L$.
The \textit{reconstruction} problem facing the experimentalist is
to characterize $(X,T)$ given this data. Stated in this way the problem
is in general not solvable as the obtained data is not sufficient
in order to reconstruct $(X,T).$ We thus make the unrealistic assumption
the experimentalist has access to $(f_{i}(T^{k}x))_{k=0}^{N}$, $i=1,\ldots K$,
$x\in X$. In other words we assume the experimentalist is able to
measure the observable during a finite amount of time, at a discrete
rate, starting out with \textit{every} single initial condition. Although
this assumption is plainly unrealistic it enables one, under certain
conditions, to solve the reconstruction problem and provide theoretical
justification to actual (approximate) procedures used by experimentalists
in real life. The first to realize this was F. Takens who proved the
famous embedding theorem, now bearing his name:
\begin{thm*}
(Takens' embedding theorem \cite[Theorem 1]{T81}) Let $M$ be a compact
manifold of dimension $d$. For pairs $(h,T)$, where $T:M\rightarrow M$
is a $C^{2}$-diffeomorphism and $h:M\rightarrow\mathbb{{R}}$ a $C^{2}$-function,
it is a generic property that the $(2d+1)$-delay observation map
$h_{0}^{2d}:M\rightarrow\mathbb{{R}}^{2d+1}$ given by

\begin{equation}
x\mapsto\big(h(x),h(Tx),\ldots,h(T^{2d}x)\big)\label{eq:delay observation map}
\end{equation}
is an embedding, i.e. the set of pairs $(h,T)$ in $C^{2}(M,\mathbb{{R}})\times C^{2}(M,M)$
for which (\ref{eq:delay observation map}) is an embedding is comeagre
w.r.t Whitney $C^{2}$-topology%
\footnote{In \cite{N91} Noakes points out the theorem is also true in the
$C^{1}$-setting and gives a alternative and more detailed proof. Another detailed and enlightening proof may be found in \cite{stark1999delay}.
}.
\end{thm*}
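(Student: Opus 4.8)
The plan is to reduce the statement to two separate generic conditions and then establish each by a transversality argument. First I would use that, because $M$ is compact, a $C^2$ map that is simultaneously an injective immersion is automatically an embedding: it is a continuous injection from a compact space into a Hausdorff space, hence a homeomorphism onto its image, and the immersion hypothesis upgrades this to a diffeomorphism onto a $C^2$ submanifold. Thus it suffices to show that the set of pairs $(h,T)$ for which the delay map $\Phi_{h,T}\colon x\mapsto\big(h(x),h(Tx),\ldots,h(T^{2d}x)\big)$ is an immersion is residual, and likewise that the set for which $\Phi_{h,T}$ is injective is residual; the intersection of two residual sets in the Baire space $C^2(M,\mathbb{R})\times C^2(M,M)$ is again residual, which gives the comeagre conclusion.

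For the immersion condition, the obstruction to injectivity of the derivative at a point $x$ is a nonzero tangent vector $v\in T_xM$ with $Dh(v)=Dh(DT\,v)=\cdots=Dh(DT^{2d}\,v)=0$. Working on the projectivized tangent bundle $\mathbb{P}(TM)$, which has dimension $2d-1$, these $2d+1$ scalar conditions cut out a locus of expected codimension $2d+1>2d-1$, hence generically empty. The substance is to verify, by parametric transversality, that perturbing the pair $(h,T)$ genuinely moves the relevant jet data transversally to this bad locus. The key computation is that, at a fixed $(x,v)$ whose orbit points $x,Tx,\ldots,T^{2d}x$ are pairwise distinct, the first-order variations of $h$ can prescribe the quantities $Dh(DT^k v)$ independently, so the associated evaluation map is a submersion; Thom's transversality theorem then yields density, and openness follows from compactness of $\mathbb{P}(TM)$ together with continuity of the derivative. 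Symmetrically, for injectivity I would apply transversality on $M\times M\setminus\Delta$, which has dimension $2d$: the failure locus $\{(x,y):\Phi_{h,T}(x)=\Phi_{h,T}(y)\}$ is cut out by the $2d+1$ equations $h(T^k x)=h(T^k y)$, $k=0,\ldots,2d$, of expected codimension $2d+1>2d$, hence generically empty, and again the point is that when the points $T^k x$ and $T^k y$ are pairwise distinct, perturbing $h$ prescribes each difference $h(T^k x)-h(T^k y)$ independently.

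The \emph{main obstacle} is that the delay structure couples all $2d+1$ coordinates through the single function $h$ and the single map $T$, so the independence exploited above breaks down exactly at orbit collisions and periodic points: if $T^k x=T^j x$ or $T^k x=T^j y$ for distinct indices in $\{0,\ldots,2d\}$, certain delay coordinates are forced to coincide and no perturbation of $h$ alone can separate them. I expect this to be the crux of the argument. My plan to handle it is to exploit the freedom to perturb $T$ as well: by a preliminary genericity step of Kupka--Smale type I would arrange that $T$ has only finitely many periodic points of sufficiently small period (at most $2d$) and that these are nondegenerate. There are then only finitely many exceptional orbit configurations, and each is treated by hand, checking that nondegeneracy supplies enough independent jet directions along the orbit for a perturbation of $h$ to restore both the immersion and the injectivity conditions on these finitely many orbits.

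Finally, to assemble the local density statements into global residual sets I would cover $M$ and $M\times M\setminus\Delta$ by countably many compact pieces on which the respective bad loci are closed, express "immersion'' and "injective'' as countable intersections of the corresponding open dense conditions, and invoke the Baire category theorem. The delicate bookkeeping will be ensuring that the compact exhaustion of $M\times M\setminus\Delta$ stays uniformly away from the diagonal, so that the two separate generic conditions (immersion near $\Delta$, injectivity away from $\Delta$) together control the full embedding property.
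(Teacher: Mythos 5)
First, a point of orientation: the paper does not actually prove this statement. Takens' theorem is quoted as background from \cite{T81}, with the footnote deferring detailed proofs to \cite{N91} and \cite{stark1999delay}; the paper's own argument establishes the continuous-observable generalization (Theorem \ref{thm:main thm}) by purely topological means (Baire category in $C(X,[0,1])$ plus covering-dimension combinatorics), which cannot yield the $C^{2}$-genericity asserted here. Your proposal follows the classical differential-topology route of Takens, Noakes and Stark: reduce embedding to injective immersion on a compact manifold, then run parametric transversality with the counts $2d+1>\dim\mathbb{P}(TM)=2d-1$ and $2d+1>\dim\big((M\times M)\setminus\Delta\big)=2d$, after a preliminary perturbation of $T$ controlling low-period orbits. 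That skeleton is right, and you correctly locate the crux at orbit collisions; but two of your specific claims fail as stated.

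The first gap is in the injectivity half: your assertion that after a Kupka--Smale perturbation ``there are then only finitely many exceptional orbit configurations'' is false. Pairs $(x,y)$ with $y=T^{m}x$, $1\le m\le2d$, $x$ non-periodic, violate your independence criterion (the two orbit segments overlap, $T^{j}y=T^{j+m}x$), and they form $d$-dimensional families that no perturbation of $T$ can remove, so they cannot be ``treated by hand'' as isolated cases. What saves the argument is a sharper independence lemma you do not state: for such pairs the functionals $h\mapsto h(T^{k}x)-h(T^{k+m}x)$, $k=0,\dots,2d$, are still linearly independent, being difference functionals on evaluations at the $2d+m+1$ distinct points of the combined segment, so the transversality step does go through there --- but this needs proof, not finiteness. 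It is no accident that the paper's case decomposition for its own generalization isolates exactly this situation ($y=T^{l}x$, $l\le2d$, non-periodic), handled via Case 2 of \cite[Proposition 4.2]{Gut12a}.

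The second gap is in the immersion half: nondegeneracy (hyperbolicity) of low-period points is not sufficient. Consider a fixed point $x$ with $DT_{x}=2\,\mathrm{Id}$; this is hyperbolic, hence permitted by any Kupka--Smale-type condition, yet $Dh_{T^{k}x}\circ DT^{k}_{x}=2^{k}Dh_{x}$, so the derivative of the delay map at $x$ has rank at most $1$ for \emph{every} $h$. For $d\ge2$ no perturbation of $h$ restores the immersion property, so your claim that ``nondegeneracy supplies enough independent jet directions along the orbit'' fails. One needs Takens' genuinely stronger (still generic) requirement that $DT^{p}_{x}$ have \emph{distinct eigenvalues} at each periodic point of period $p\le2d+1$, after which a Vandermonde-type argument shows that for generic $h$ the covectors $Dh\circ DT^{k}$ span the cotangent space along the orbit. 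Both gaps are repairable within your framework, but each repair is a genuine missing idea rather than routine bookkeeping.
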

A key point of the theorem is the possibility to use \textit{one} observable
and still be able to achieve embedding through an associated \textit{delay
observation map}. Indeed the classical Whitney embedding theorem (see
\cite[Section 2.15.8]{N73}) states that generically a $C^{2}$-function
$\vec{F}=(F_{1},\ldots,F_{2d+1}):M\rightarrow\mathbb{{R}}^{2d+1}$
is an embedding but this would correspond to the feasibility of measuring
$2d+1$ \textit{independent} observables which is unrealistic for
many experimental layouts even if $d$ is small.

A decade after the publication of Takens' embedding theorem it was
generalized by Sauer, Yorke and Casdagli in \cite{SYC91}. The generalization
is stronger in several senses. In their theorem the dynamical system
is fixed and the embedding is achieved by perturbing solely the observable.
This widens the (theoretical) applicability of the theorem but necessitates
some assumption about the size of the set of periodic points. Moreover
they argue that the concept of (topological) genericity used by Takens
is better replaced by a measurable variant of genericity which they call
\textit{prevalence}%
\begin{comment}
\textit{ }(\cite[Definition 2.1]{SYC91}\textit{)}%
\footnote{We will not adopt this point of view in the sequel.%
}
\end{comment}
\textit{.} They also call to attention the fact that in many physical
systems the experimentalist tries to characterize a finite dimensional
fractal (in particular non-smooth) attractor to which the system converges
to, regardless of the initial condition (for sources discussing such
systems see \cite{H88,Lady91,Temam97}). The key point is that although
this attractor may be of low fractal dimension, say $l$ , it embeds
in phase space in a high dimensional manifold of dimension, say $n>>l$.%
\footnote{Notice as pointed out in \cite[p. 587]{SYC91}, it is possible that
the minimal dimension of a smooth manifold containing the attractor
equals the dimension of the phase space. %
} As Takens' theorem requires the phase space to be a manifold it gives
the highly inflated number of required measurements $2n+1$ instead
of the more plausible $2l+1$. Indeed in \cite{SYC91} it is shown
that given a $C^{1}$-diffeomorphism $T:U\rightarrow U$, where $U\subset\mathbb{{R}}^{k}$
and a compact $A\subset U$ with \textit{lower box dimension} $d$,
$\underline{\dim}_{box}(A)=d$, under some technical assumptions on
points of low period, it is a prevalent property for $h\in C^{1}(U,\mathbb{{R}})$
that the $(2d+1)$-delay observation map $h_{0}^{2d}:U\rightarrow\mathbb{{R}}^{2d+1}$
is a topological embedding when restricted to $A$.

In the case of many physical systems, the underlying space in which the finite dimensional  attractor arises, is infinite dimensional. In \cite{Rob05} Robinson generalized the previous result to the infinite dimensional context and showed that
given a Lipschitz map $T:H\rightarrow H$, where $H$ is a Hilbert space and a compact $T$-invariant set $A\subset H$ with \textit{upper box dimension} $d$,
$\overline{\dim}_{box}(A)=d$, under some technical assumptions on
points of low period, and how well $A$ can be approximated by linear subspaces, it is a prevalent property for Lipschitz
maps $h:H\to \mathbb{R}$
that the $(2d+1)$-delay observation map $h_{0}^{2d}:H\rightarrow\mathbb{{R}}^{2d+1}$ is injective on $A$\footnote{Another approach for the infinite dimensional setting is given in \cite{Gut16b} with respect to a two-dimensional model of the Navier-Stokes equation. The system has (a typically infinite dimensional) compact \textit{absorbing set}, to which it reaches after a finite and calculable time (depending on the initial condition). It is shown that this set may be embedded in a cubical shift $([0,1])^{\mathbb{Z}}$ through a infinite-delay observation map $x\mapsto\big(h(x),h(Tx),\ldots\big)$}.
In this work we show that if one is allowed to use continuous (typically
non-smooth) observables then generically one needs even less measurements than previously mentioned
in order to reconstruct the original dynamical system. This is achieved
by using Lebesgue covering dimension instead of box dimension.
We also weaken the invertibility assumption to the more realistic injectivity
assumption (see discussion in \cite[III.6.2]{Temam97}). We prove:
\begin{thm}
\label{thm:main thm}Let $X$ be a compact metric space and $T:X\rightarrow X$
an injective continuous mapping. Assume $\dim(X)=d$ and $\dim(P_{n})<\frac{1}{2}n$
for all $n\leq2d$, where $\dim(\cdot)$ refers to Lebesgue covering
dimension and $P_{n}$ denotes the set of periodic points of period
$\leq n$. Then it is a generic property that the $(2d+1)$-delay
observation map $h_{0}^{2d}:X\rightarrow[0,1]^{2d+1}$ given by

\begin{equation}
x\mapsto\big(h(x),h(Tx),\ldots,h(T^{2d}x)\big)\label{eq:delay observation map-1}
\end{equation}
is an embedding, i.e. the set of functions in $C(X,[0,1])$ for which
(\ref{eq:delay observation map-1}) is an embedding is comeagre w.r.t
supremum topology.
\end{thm}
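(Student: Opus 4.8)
The plan is to reduce to injectivity and then run a Baire category argument. Since $X$ is compact and each delay map $h_0^{2d}$ is continuous into the Hausdorff space $[0,1]^{2d+1}$, such a map is an embedding as soon as it is injective, so it suffices to show that the set of $h\in C(X,[0,1])$ making $h_0^{2d}$ injective is comeagre. For $\eps>0$ I would set
\[
U_\eps=\{\,h\in C(X,[0,1]):h_0^{2d}(x)\neq h_0^{2d}(y)\ \text{whenever}\ d(x,y)\ge\eps\,\}.
\]
The injective maps form exactly $\bigcap_{n\ge1}U_{1/n}$, so by Baire it is enough to prove each $U_\eps$ is open and dense. Openness is routine: if $h\in U_\eps$ then $\inf\{\norm{h_0^{2d}(x)-h_0^{2d}(y)}:d(x,y)\ge\eps\}$ is attained on the compact set $\{d(x,y)\ge\eps\}$ and equals some $\delta>0$, and every $h'$ with $\norm{h'-h}<\delta/2$ still lies in $U_\eps$. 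The whole content is the density of $U_\eps$.

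For density I would fix $h$ and $\eta>0$ and seek $h'\in U_\eps$ with $\norm{h'-h}<\eta$, guided by a dimension count. A coincidence $h_0^{2d}(x)=h_0^{2d}(y)$ with $x\neq y$ means the $2d+1$ equations $h(T^ix)=h(T^iy)$, $0\le i\le2d$, all hold; the pairs $(x,y)$ range over a set of covering dimension at most $2d=2\dim(X)$, so if these conditions behaved independently a generic perturbation of $h$ would cut the coincidence set to dimension $\le 2d-(2d+1)<0$, i.e.\ empty. This is the Menger--N\"obeling budget ``$2\dim+1$'' realised one coordinate at a time. The mechanism is that a perturbation of $h$ supported near a point $z$ alters the $i$-th coordinate of $h_0^{2d}(x)$ only when $T^ix$ is close to $z$; to control the $2d+1$ coordinates of one orbit independently the segment $x,Tx,\dots,T^{2d}x$ must have $2d+1$ distinct points, and to decouple two points $x\neq y$ their segments must be disjoint. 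Here injectivity of $T$ is decisive: if $T^ix=T^jy$ with $i\le j$ then $x=T^{j-i}y$, so distinct forward orbit segments meet only when $x,y$ lie on a common orbit. Hence for aperiodic pairs on distinct orbits the conditions are genuinely independent, and a stratified general-position perturbation of $h$ — built from a finite open cover of $X$ of order $\le d+1$ with mesh adapted to $\eps$, a subordinate partition of unity, and the fact that a $d$-dimensional complex sits in general position in $\R^{2d+1}$ — separates all such pairs simultaneously.

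The main obstacle, and the only place the hypothesis $\dim(P_n)<\tfrac12 n$ enters, is precisely the degenerate pairs just excluded: points on a common orbit, and worst of all periodic points. If $x$ has exact period $p\le2d$, its orbit segment contains only the $p$ distinct points $x,\dots,T^{p-1}x$, so the delay vector is a $p$-periodic repetition and $h_0^{2d}$ restricted to the set $A_p$ of such points factors through the $p$ functions $h,h\circ T,\dots,h\circ T^{p-1}$, i.e.\ through a map into a $p$-dimensional cube. One then has only $p$ effective coordinates with which to separate the points of $A_p$ — and to distinguish points on a common periodic orbit, whose delay vectors are cyclic shifts of each other. Embedding $A_p$ with $p$ coordinates needs the Menger--N\"obeling inequality $2\dim(A_p)+1\le p$, which is exactly what the hypothesis supplies: since $\dim(A_p)\le\dim(P_p)$ is an integer strictly below $p/2$, one gets $2\dim(P_p)+1\le p$. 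The non-periodic same-orbit pairs $y=T^kx$ are similar but easier, since the relevant segment still has $2d+1$ distinct points and one need only destroy the accidental shift-invariance $h(T^ix)=h(T^{i+k}x)$, again a positive-codimension condition.

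To assemble the density proof I would stratify $X$ by periodic structure — the sets $A_p$ for $p\le2d$, the aperiodic points, and the diagonal-type coincidence sets along each stratum — and perturb $h$ stratum by stratum, from lowest period upward, each time exploiting that the number of effective coordinates exceeds twice the covering dimension of that stratum. The hard part, and where I expect the real work to lie, is making this stratified argument uniform and simultaneous: producing a \emph{single} $h'$ within $\eta$ of $h$ that resolves all pairs with $d(x,y)\ge\eps$ at once, controlling the interaction between strata (a perturbation tuned to separate low-period points must not reintroduce coincidences among nearby aperiodic points) and phrasing the general-position lemmas for compact metric spaces rather than polyhedra. Checking that $\dim(P_n)<\tfrac12 n$ is exactly the bookkeeping required at every periodic stratum, and that injectivity of $T$ rather than invertibility suffices throughout, is the crux.
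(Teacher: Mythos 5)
Your Baire-category framework is sound and matches the paper's: reducing embedding to injectivity via compactness is correct, your sets $U_{\eps}$ (with $K_{\eps}=\{(x,y):d(x,y)\ge\eps\}$ compact) are the analogue of the paper's sets $D_{K}$, and your openness argument is the same as the paper's. But the proposal has a genuine gap, and you name it yourself: the whole content of the theorem is the density of $U_{\eps}$, and what you offer for it is a heuristic codimension count (``if these conditions behaved independently\dots''), a plan to ``stratify and perturb stratum by stratum,'' and the closing admission that producing a \emph{single} perturbation handling all strata and all far-apart pairs simultaneously ``is the crux.'' Nothing in the proposal constructs that perturbation or proves it works, so the density claim, and hence the theorem, is not established.

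Moreover, your choice of exhaustion makes the missing step strictly harder than it needs to be: density of $U_{\eps}$ forces one perturbation to resolve every pair with $d(x,y)\ge\eps$ at once, which is exactly the inter-strata interaction you flag as unresolved. The paper sidesteps this by localizing. It covers $(X\times X)\setminus\Delta$ by countably many \emph{small} compact boxes $K_{(x,y)}=\overline{U}_{x}\times\overline{U}_{y}$, chosen (using injectivity of $T$ and the structure of the sets $H_{n}=P_{n}\setminus P_{n-1}$) so that the orbit segments $\overline{U}_{x},T\overline{U}_{x},\ldots,T^{t_{x}}\overline{U}_{x},\overline{U}_{y},\ldots,T^{t_{y}}\overline{U}_{y}$ are pairwise disjoint, and proves density of $D_{K_{(x,y)}}$ for each box separately; the Baire theorem then absorbs the countability, so no single perturbation ever has to handle two strata at once. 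Within one box the mechanism is not transversality but a combinatorial general-position lemma (\cite[Lemma A.9]{Gut12a}): one takes covers $\alpha_{x},\alpha_{y}$ of $\overline{U}_{x},\overline{U}_{y}$ with $\ord(\alpha_{j})<\frac{t_{j}+1}{2}$ (this is precisely where $\dim(X)=d$ and $\dim(P_{n})<\frac{n}{2}$ enter), obtains vector-valued functions $F_{j}$ taking values in convex hulls of prescribed vectors and satisfying $F_{x}(x')\neq F_{y}(y')^{\oplus(2d+1)}$, transports their coordinates along the disjoint orbit segments to define the perturbed observable there, and extends by Tietze. To complete your argument you would either have to prove a uniform, all-pairs-at-once version of such a general-position lemma (much stronger than what the paper needs), or adopt the localization: your $U_{1/n}$ then follows from finitely many of the paper's $D_{K_{i}}$ by compactness of $K_{1/n}$, so the local route also yields your sets for free.
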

The Lebesgue covering dimension of a compact metric space is always
smaller or equal to the lower box-counting dimension (See \cite[Equation 9.1]{Rob11})
and it is not hard to construct compact metric spaces for which the
Lebesgue covering dimension is strictly less than the (lower) box-counting
dimension, e.g. if $C$ is the Cantor set then the box dimension of
$C^{\mathbb{{N}}}$ is infinite whereas the covering dimension is
zero. Thus from a theoretical point of view this enables one to reconstruct
(using typically a non-smooth observable) dynamical systems with less
measurements than were known to suffice previously. Moreover this
can be used when the goal of the experiment is to calculate a topological
invariant such a \textit{topological entropy}. However I am not certain
this result has a bearing on actual experiments. Indeed it has been
pointed out to me by physicists that modelling measurements in the
lab by smooth functions is realistic, thus non smooth observables
are ``non-accessible'' for the experimentalist.

Our result is closely related to a result we published in \cite{Gut12a}.
In that article it was shown, among other things, that given a finite
dimensional topological dynamical system $(X,T)$, where $X$ is a
compact metric space with $\dim(X)=d<\infty$ and $T:X\rightarrow X$ is a homeomorphism, such that $\dim(P_{n})<\frac{1}{2}n$ for all
$n\leq2d$, then $(X,T)$ embeds in the \textit{cubical shift} $([0,1])^{\mathbb{Z}}$,
$(X,T)\hookrightarrow(([0,1])^{\mathbb{Z}},\sigma-\shift)$, where
the shift action $\sigma$ is given by $\sigma(x_{i})_{i\in\mathbb{{Z}}}=(x_{i+1})_{i\in\mathbb{{Z}}}$.
It is not hard to conclude this result from Theorem \ref{thm:main thm}
but we are interested in the reverse direction. It would have been
possible to rewrite \cite{Gut12a} in such a way so that Theorem \ref{thm:main thm}
follows, however at the time of its writing we were not aware of the
connection to Takens' theorem. Unfortunately a specific part of the
proof in \cite{Gut12a} uses the fact that $([0,1])^{\mathbb{Z}}$
is infinite dimensional and therefore is not straightforwardly adaptable
to a proof of Theorem \ref{thm:main thm}. In this work we give an
alternate and detailed proof of this specific part which is suitable
for Theorem \ref{thm:main thm} and indicate how the other parts directly
follow from \cite{Gut12a}. As mentioned before we only assume $T:X\rightarrow X$
is injective and not necessarily a homeomorphism such as in \cite{Gut12a}.
Following Takens we will only deal with the case of \textit{one} observable.
The case of several observables follows similarly.
\begin{rem}
 Let $(X,(T_t)_{t\in \mathbb{R}})$ be a flow on a compact metric space $X\subset \mathbb{R}^{k}$ with $\dim(X)=d$, arising from an ordinary differential equation $x=\dot{F}(x)$ where the function $F:X\to \mathbb{R}^{k}$ obeys the Litschitz condition $\|F(x)-F(y)\|\leq L \|x-y\|$. By a theorem of Yorke (\cite{Y69}) for any $0<t<\frac{\pi}{Ld}$ the dynamical system $(X,T_t)$ has no periodic points of order less than $2d+1$ and therefore satisfies the assumptions of Theorem \ref{thm:main thm}.
\end{rem}

\textbf{Acknowledgements: }I would like to thank Brian Hunt, Ed Ott,
Benjamin Weiss and Jim Yorke for helpful discussions. I am grateful to the anonymous reviewer for useful comments. The research was partially supported by the Marie Curie grant PCIG12-GA-2012-334564 and by the National Science Center (Poland)
grant 2013/08/A/ST1/00275.

\section{Preliminaries}

\subsection{Dimension}

Let $\mathcal{C}$ denote the collection of open (finite) covers of
$X$. Given an open cover $\alpha\in\mathcal{C}$ and a point $x\in X$ we may count the number of elements in $\alpha$ to which $x$ belongs, i.e. $|\{i|\,x\in U_i\}|=\sum_{U\in\alpha}1_{U}(x)$. The \textit{order} of $\alpha$ is essentially defined by maximizing this quantity: $\ord(\alpha)=-1+\max_{x\in X}\sum_{U\in\alpha}1_{U}(x)$. Alternatively the \textit{order} of $\alpha$ is the minimal integer $n$ for which any distinct $U_1,U_2,\ldots, U_{n+2}\in \alpha$ obey $\bigcap_{i=1}^{n+2}U_i=\emptyset$.
Let $D(\alpha)=\min_{\beta\succ\alpha}\ord(\beta)$ (where $\beta$
\emph{refines} $\alpha$, $\beta\succ\alpha$, if for every $V\in\beta$,
there is $U\in\alpha$ so that $V\subset U$). The \textbf{Lebesgue
covering dimension} is defined by $\dim(X)=\sup_{\alpha\in\mathcal{C}}D(\alpha)$.

\subsection{Period}

For an injective map $T:X\rightarrow X$ we define the period of $x\in X$
to be the minimal $p\geq1$ so that $T^{p}x=x$. If no such $p$ exists
the period is said to be $\infty$. If the period of $x$ is finite
we say $x$ is \textbf{periodic}. We denote the set of periodic points
in $X$ by $P$. As $T$ is injective any \textit{preimage of a periodic
point is periodic of the same period}. Indeed $T_{|P}$, $T$ restricted
to $P$, is invertible.

\subsection{Supremum topology. }

One defines on $C(X,[0,1])$ the supremum metric |$|\cdot\|_{\infty}$
given by $\|f-g\|_{\infty}\triangleq\max_{x\in X}|f(x)-g(x)|$.

\section{Proof of the theorem}

In this section we prove Theorem \ref{thm:main thm}. The proof is
closely related to the proof of \cite[Theorem 8.1]{Gut12a} but unfortunately
does not follow directly from it. We thus supply the necessary details.

\subsection{The Baire Category Theorem Framework\label{sub:The-Baire-Category}}

The main tool of the proof is the Baire category theorem. We start
with several definitions:

\begin{defn}
 A \textbf{Baire space}, is a topological space where the intersection
of countably many dense open sets is dense. By the Baire category theorem $(C(X,[0,1]),\|\cdot\|_{\infty})$,
is a Baire space. A set in a topological space is said to be \textbf{comeagre} or \textbf{generic}
if it is the complement of a countable union of nowhere dense sets.
A set is said to be $\mathbf{G_{\delta}}$ if it is the countable intersection
of open sets. Note that a dense $G_{\delta}$ set is comeagre.
\end{defn}

\begin{defn}
Let $K\subset(X\times X)\setminus\Delta$ be a compact set, where
$\Delta=\{(x,x)|\, x\in X\}$ is the \textit{diagonal} of $X\times X$
and suppose $h\in C(X,[0,1])$. Denote $h_{0}^{2d}(x)\triangleq\big(h(x),h(Tx),\ldots,h(T^{2d}x)\big)$.
We say that $h_{0}^{2d}$ is \textbf{$K$-compatible} if for every
$(x,y)\in K$, $h_{0}^{2d}(x)\neq h_{0}^{2d}(y)$, or equivalently
if for every $(x,y)\in K$, there exists $n\in\{0,1,\ldots2d\}$ so
that $h(T^{n}x)\neq h(T^{n}y)$. Define:
\[
D_{K}=\{h\in C(X,[0,1])|\, h_{0}^{2d}\,\,\mathrm{is\,\,} K-\mathrm{compatible}\}
\]
\end{defn}

In the next subsection we prove the following key lemma:
\begin{lem}
(Main Lemma) \label{lem:key lem} One can represent $(X\times X)\setminus\Delta$
as a countable union of compact sets $K_{1},K_{2},\ldots$ such that
for all $i$ $D_{K_{i}}$ is open and dense in $(C(X,[0,1]),\|\cdot\|_{\infty})$. \end{lem}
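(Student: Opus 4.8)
The plan is to exhibit a compact exhaustion of $(X\times X)\setminus\Delta$ and reduce the lemma to two separate claims, openness and density, for each member. Fix a metric $\rho$ on $X$ and set $K_{i}=\{(x,y)\in X\times X:\rho(x,y)\ge 1/i\}$. Each $K_{i}$ is a closed subset of the compact space $X\times X$, hence compact; each is disjoint from $\Delta$; and $\bigcup_{i}K_{i}=(X\times X)\setminus\Delta$, since a pair lies off the diagonal exactly when its coordinates are at positive distance. It then suffices to prove that for every compact $K\subset(X\times X)\setminus\Delta$ the set $D_{K}$ is open and dense in $(C(X,[0,1]),\|\cdot\|_{\infty})$.

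Openness is the routine half. For $h\in C(X,[0,1])$ define the continuous function $\Phi_{h}:K\to[0,1]$ by $\Phi_{h}(x,y)=\max_{0\le n\le 2d}|h(T^{n}x)-h(T^{n}y)|$. By definition $h\in D_{K}$ iff $\Phi_{h}>0$ on $K$, and by compactness this is equivalent to $\varepsilon:=\min_{K}\Phi_{h}>0$. If $\|g-h\|_{\infty}<\varepsilon/2$, then for each $(x,y)\in K$ and the index $n$ realizing the maximum one has $|g(T^{n}x)-g(T^{n}y)|\ge\Phi_{h}(x,y)-2\|g-h\|_{\infty}>0$, so $g\in D_{K}$; hence $D_{K}$ is open.

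Density is the crux, and the place where the dimension hypotheses enter. Given $h$ and $\epsilon>0$ I would build $g\in D_{K}$ with $\|g-h\|_{\infty}<\epsilon$ by a general-position perturbation argument that uses the definition of covering dimension, namely that $X$ carries arbitrarily fine open covers of order $\le d$, and is organized around the collision pattern of the sample set $S(x,y)=\{T^{n}x,T^{n}y:0\le n\le 2d\}$. When $x$ and $y$ are aperiodic and lie on distinct orbits, the up to $4d+2$ points of $S(x,y)$ are distinct, so a perturbation may be prescribed independently near each of them; the $2d+1$ coordinates of $h_{0}^{2d}$ then become effectively free, and since $\dim(X\times X)\le 2\dim(X)=2d<2d+1$, the coincidence locus $\{h_{0}^{2d}(x)=h_{0}^{2d}(y)\}$, a codimension-$(2d+1)$ condition, can be swept off $K$ by an arbitrarily small perturbation, exactly as in the topological embedding theorem for maps of a $d$-dimensional compactum into $\mathbb{R}^{2d+1}$. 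The remaining pairs are those meeting the periodic set: if $x$, hence by injectivity of $T$ its whole orbit, has period $p\le 2d$, the delay vector repeats with period $p$ and takes values in an effective $[0,1]^{p}$, so separating two such points amounts to making a map of $P_{p}\times P_{p}$ into $\mathbb{R}^{p}$ miss the origin; the hypothesis $\dim(P_{p})<p/2$ gives $\dim(P_{p}\times P_{p})\le 2\dim(P_{p})<p$, which is again the strict codimension inequality needed for avoidance. Assembling these local perturbations over a finite cover of $K$, and controlling each perturbation's size so the total stays below $\epsilon$, yields the required $g$.

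The main obstacle is precisely the constrained nature of the perturbation: unlike the classical embedding theorem, where the $2d+1$ coordinate functions may be moved independently, here every coordinate is the single function $h$ evaluated along an orbit, so the available perturbation directions degenerate exactly on the periodic set and on orbit-coincidences. The real work is to show that the injectivity of $T$ together with the bounds $\dim(P_{n})<\tfrac{1}{2}n$ restore enough transversality to run the general-position argument uniformly over $K$. This is the step that the infinite-dimensionality of the cubical shift renders automatic in \cite{Gut12a} and which must here be carried out by hand in the finite-dimensional target $[0,1]^{2d+1}$; stratifying $K$ by period and by orbit-overlap, and bounding the covering dimension of each stratum, is where the bulk of the technical effort will lie.
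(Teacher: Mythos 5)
Your compact exhaustion $K_{i}=\{(x,y):\rho(x,y)\ge 1/i\}$ and your openness argument are both correct (openness is exactly the paper's routine argument, via the minimum of $\Phi_h$ on $K$ and a $2\|g-h\|_\infty$ estimate). But the entire content of the lemma is the density claim, and there what you give is a plan rather than a proof, with concrete holes in the case analysis. First, aperiodic pairs on the \emph{same} orbit with small lag, $y=T^{l}x$, $1\le l\le 2d$, fall through your trichotomy: they are not "distinct orbits with all $4d+2$ sample points distinct," nor do they "meet the periodic set," yet their delay vectors share coordinates ($h(T^{n}y)=h(T^{n+l}x)$), so the dimension-counting you invoke does not apply as stated; the paper disposes of them by a separate argument (Case 2 of \cite[Proposition 4.2]{Gut12a}). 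Second, the mixed case --- $y$ periodic of period $p\le 2d$ and $x$ aperiodic --- is precisely the set $C_{3}$ that the paper identifies as the part \emph{not} following from earlier work and treats in full detail; your "remaining pairs" discussion reduces everything to a map on $P_{p}\times P_{p}$ missing the origin, which only addresses pairs where \emph{both} points are periodic, and says nothing about separating a period-$p$-repeating delay vector from an unconstrained one. The paper handles this via a general-position lemma \cite[Lemma A.9]{Gut12a} producing maps $F_{x},F_{y}$ with $F_{x}(x')\neq F_{y}(y')^{\oplus(2d+1)}$, i.e.\ the aperiodic vector avoids all periodic extensions of the periodic one.

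More structurally, you correctly name the main obstacle --- every coordinate of $h_{0}^{2d}$ is the same function $h$ pushed along an orbit, so perturbations are not free --- but naming it is not overcoming it, and your choice of exhaustion works against you. The paper's density proof only goes through because its compact sets are chosen \emph{small}: products $\overline{U}_{x}\times\overline{U}_{y}$ whose iterates $\overline{U}_{x},T\overline{U}_{x},\ldots,T^{2d}\overline{U}_{x},\overline{U}_{y},\ldots,T^{t_{y}}\overline{U}_{y}$ are pairwise disjoint, so that prescribing values on each iterate separately defines a single continuous function $f'$ on their union, which is then corrected to be close to $\tilde f$ (via covers $\alpha_{x},\alpha_{y}$ of order $\le d$, resp.\ $<\tfrac{t_{y}+1}{2}$ --- this is where $\dim X=d$ and $\dim P_{n}<\tfrac n2$ actually enter) and extended by Tietze. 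Your sets $K_{i}$ mix all period strata and all orbit configurations at once, and the strata you would need (aperiodic points, points of exact period $n$) are not closed, so no single local construction of this kind can be run on $K_{i}$; the paper's countable cover by orbit-adapted product sets (extracted via the Lindel\"of property, after splitting into $C_{1},C_{2},C_{3}$) is not a convenience but the step that makes density provable at all. As written, proving density of $D_{K_{i}}$ for your $K_{i}$ is essentially equivalent to proving the whole theorem, which is the gap.
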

\begin{proof}
{[}Proof of Theorem \ref{thm:main thm} using Lemma \ref{lem:key lem}{]}
As for all $i$, $D_{K_{i}}$
is open and dense in $(C(X,[0,1]),\|\cdot\|_{\infty})$, we have that
$\bigcap_{i=1}^{\infty}D_{K_{i}}$ is dense in $(C(X,[0,1]),\|\cdot\|_{\infty})$.
Any $h\in\bigcap_{i=1}^{\infty}D_{K_{i}}$ is $K_{i}$-compatible
for all $i$ simultaneously and therefore realizes an embedding $h_{0}^{2d}:(X,T)\hookrightarrow[0,1]^{2d+1}$. As a dense $G_{\delta}$ set is comeagre, the above
argument shows that the set $\mathcal{A}\subset C(X,[0,1])$ for which
$h_{0}^{2d}:(X,T)\hookrightarrow[0,1]^{2d+1}$ is an embedding is
comeagre, or equivalently, that the fact of $h_{0}^{2d}$ being an
embedding is generic in $(C(X,[0,1]),\|\cdot\|_{\infty})$.
\end{proof}
It is not hard to see that for every compact $K\subset(X\times X)\setminus\Delta$,
$D_{K}$ is open in $(C(X,[0,1]),\|\cdot\|_{\infty})$ (see \cite[Lemma A.2]{Gut12a}).

\begin{comment}
For completeness we include the proof:
\begin{lem}
\label{lem:D_K is open} For any $K\subset(X\times X)\setminus\Delta$
compact, $D_{K}$ is open in $(C(X,[0,1]),\|\cdot\|_{\infty})$.\end{lem}
\begin{proof}
Fix $f\in D_{K}$. Define $\alpha:K\rightarrow[0,1]$ given by $\alpha(x,y)\triangleq\max_{n\in\{0,1,\ldots2d\}}\|f(T^{n}x)-f(T^{n}y)\|_{\infty}$.
Notice that $\alpha_{|K}>0$ and that $\alpha$ is continuous and
therefore attains a minimum $\epsilon>0$ on $K$. Conclude that for
$g\in C(X,[0,1]^{d})$, with $\|g-f\|_{\infty}<\frac{\epsilon}{3}$,
one has $g\in D_{K}$ as $\|g(T^{n}x)-g(T^{n}y)\|_{\infty}\geq\|f(T^{n}x)-f(T^{n}y)\|_{\infty}-\frac{2\epsilon}{3}\geq\frac{\epsilon}{3}$
for any $(x,y)\in K$ and $n\in\{0,1,\ldots2d\}$.\end{proof}
\end{comment}

\subsection{Proof of the main lemma }

We write $(X\times X)\setminus\Delta$ as the union of the following
three sets: $C_{1}=(X\times X)\setminus\big(\Delta\cup(P\times X)\cup(X\times P)\big)$,
$C_{2}=(P\times P)\setminus\Delta$, $C_{3}=\big((X\setminus P)\times P\big)\cup\big(X\times(X\setminus P)\big)$.
In words $(x,y)$ (where $x\neq y$) belong to the first, second,
third set if both $x,y$ are not periodic, both $x,y$ are periodic,
either $x$ or $y$ are periodic but not both respectively. We then
cover each of these sets, $j=1,2,3$ by a countable union of compact
sets $K_{1}^{(j)},K_{2}^{(j)},\ldots$ such that for all $i$, $D_{K_{i}^{(j)}}$
is open and dense in $(C(X,[0,1]),\|\cdot\|_{\infty})$.

Assume $(x,y)\in C_{3}$, w.l.o.g $y\in P$ and $x\notin P$. Denote
the period of $y$ by $n<\infty$. Let $t_{y}=\min\{n-1,2d\}$. Let
$H_{n}$ be the set of $z\in X$, whose period is $n$. In other words
$H_{n}=P_{n}\setminus P_{n-1}$. Notice $H_{n}$ is open in $P_{n}$
and $T$-invariant. Let $U_{y}$ be an open set in $H_{n}$ (but not
necessarily open in $X$) so that $y\in U_{y}\subset\overline{U}_{y}\subset H_{n}$
and $\overline{U}_{y}\cap T^{l}\overline{U}_{y}=\emptyset$ for $l=1,2,\ldots,t_{y}$.
E.g. if $d(y,P_{n-1})=r>0$, let $0<\epsilon<r$ small enough so that
$U_{y}=B_{\epsilon}(y)\cap H_{n}$ and $\overline{U}_{y}=\overline{B}_{\epsilon}(y)\cap P_{n}=\overline{B}_{\epsilon}(y)\cap H_{n}$.
As $x\notin P$, the \textit{forward orbit} $\{T^{k}x\}_{k\geq0}$
of $x$ is disjoint from $P_{n}$. In particular we may choose an open
set $U_{x}$ such that $x\in U_{x}\subset X\setminus P_{n}$ (note $X\setminus P_{n}$
is a $T$-invariant open set) such that, setting $t_{x}~ =~ 2d$,
$\overline{U}_{x},T\overline{U}_{x},\ldots,T^{t_{x}}\overline{U}_{x},\overline{U}_{y},T^{1}\overline{U}_{y},\ldots,T^{t_{y}}\overline{U}_{y}$
are pairwise disjoint. We now define $K_{(x,y)}=\overline{U}_{x}\times\overline{U}_{y}$.
As $X$ is second-countable, every subspace is a \textit{Lindelöf
space}, i.e every open cover has a countable subcover. For every $n=1,2,\ldots$,
$H_{n}$ can be covered by a countable number of sets of the form
$U_{y}$. Similarly $X\setminus P$ can be covered by countable number
of sets of the form $U_{x}$. We can thus choose a countable cover
of $C_{3}$ by sets of the form $K_{(x,y)}$. We are left with the
task of showing $D_{K_{(x,y)}}$ is dense in $(C(X,[0,1]),\|\cdot~\|_{\infty})$.
Let $\epsilon>0$. Let $\tilde{f}:X\rightarrow[0,1]$ be a continuous
function. We will show that there exists a continuous function $f:X~\rightarrow~[0,1]$
so that $\|f-\tilde{f}\|_{\infty}<\epsilon$ and $f{}_{0}^{2d}$ is
$K_{(x,y)}$-compatible. Let $\alpha_{x}$ and $\alpha_{y}$ be open
covers of $\overline{U}_{x}$ and $\overline{U}_{y}$ respectively
such that it holds for $j=x,y$ $\max_{W\in\alpha_{j},k\in\{0,1,\ldots,t_{j}\}}\diam(\tilde{f}(T^{k}W))<\frac{\epsilon}{2}$
and

\begin{equation}
\ord(\alpha_{j})<\frac{t_{j}+1}{2}\label{eq:key eq}
\end{equation}
For $\alpha_{x}$ this amounts to $\ord(\alpha_{x})\leq d$ which
is possible as $\dim(X)=d$ (recall $t_{x}=2d$). The same is true
for $\alpha_{y}$ if $t_{y}\geq2d$. If $t_{y}<2d$, this is possible
as by assumption $\dim(\overline{U}_{y})\leq\dim(P_{t_{y}+1})<\frac{t_{j}+1}{2}$.
For each $W\in\alpha_{j}$ choose $q_{W}\in W$ so that $\{q_{W}\}_{W\in\alpha_{j}}$
is a collection of distinct points in $X$. Define $\tilde{v}_{W}=(\tilde{f}(T^{k}q_{W}))_{k=0}^{t_{j}}$.
Notice $t_{x}\geq t_{y}$. By Lemma \cite[Lemma A.9]{Gut12a}, as
(\ref{eq:key eq}) holds, one can find for $j=x,y$ continuous functions
$F_{j}:\overline{U}_{j}\rightarrow[0,1]{}^{t_{j}+1}$, with the following
properties:
\begin{enumerate}
\item \label{enu:F is nearly v}$\forall W\in\alpha_{j}$, $\|F_j(q_{W})-\tilde{v}_{W}\|_{\infty}<\frac{\epsilon}{2}$,
\item \label{enu:convex approximation}$\forall z\in\overline{U}_{x}\cup\overline{U}_{y}$,
$F_{j}(z)\in\co\{F_{j}(q_{W})|\, z\in W\in\alpha_{j}\}$, where $\co{(\{v_{1},\ldots,v_{m}\})}\triangleq{\{\sum_{i=1}^{m}\lambda_{i}v_{i}|\ \sum_{i=1}^{m}\lambda_{i}=1,\lambda_{i}\geq0\}}$
,
\item \label{enu:F(x)neqF(y)}If $x'\in\overline{U}_{x}$ and $y'\in\overline{U}_{y}$
then $F_{x}(x')\neq F_{y}(y')^{\oplus(2d+1)}$, where $F_{y}(y')^{\oplus(2d+1)}:\overline{U}_{y}\rightarrow[0,1]^{2d+1}$
is the function given by the formula $[F_{y}(y')^{\oplus(2d+1)}](k)\triangleq [F_{y}(y')](k\,\mod\,(t_{y}+1))$,
$k=0,1,\ldots,2d$.
\end{enumerate}
Let $A=\bigcup_{j=x,y}\bigcup_{k=0}^{t_{j}}T^{k}\overline{U}_{j}$.
Define $f':A\rightarrow[0,1]$ ($j=x,y$) by:
\[
f'_{|T^{k}\overline{U}_{j}}(T^{k}z)=[F_{j}(z)](k)
\]
Fix $z\in\overline{U}_{j}$ and $k\in\{0,1,\ldots,t_{j}\}$. As by
property (\ref{enu:convex approximation}), $f'(T^{k}z)=[F_{j}(z)](k)\in\co\{[F_{j}(q_{W})](k) |\, z\in W\in\alpha_{j}\}$,
we have $|f'(T^{k}z)-\tilde{f}(T^{k}z)|\leq\max_{z\in W\in\alpha_{j}}|[F_{j}(q_{W})](k)-\tilde{f}(T^{k}z)|$.
Fix $W\in\alpha_{j}$ and $z\in W$. Note $|[F_{j}(q_{W})](k)-\tilde{f}(T^{k}z)|\leq|[F_{j}(q_{W})](k)-[\tilde{v}_{W}](k)|+|[\tilde{v}_{W}](k)-\tilde{f}(T^{k}z)|$.
The first term on the right-hand side is bounded by $\frac{\epsilon}{2}$
by property (\ref{enu:F is nearly v}). As $\diam(\tilde{f}(T^{k}W))<\frac{\epsilon}{2}$
and $[\tilde{v}_{W}](k)=\tilde{f}(T^{k}q_{W})$ we have $|\tilde{f}(T^{k}q_{W})-\tilde{f}(T^{k}z)|<\frac{\epsilon}{2}$.
We finally conclude $\|f'-\tilde{f}_{|A}\|_{\infty}<\epsilon$. By
an easy application of the Tietze Extension Theorem (see \cite[Lemma A.5]{Gut12a})
there is $f:X\rightarrow[0,1]$ so that $f{}_{|A}=f'$ and $\|f-\tilde{f}\|_{\infty}<\epsilon$.
Assume for a contradiction $f_{0}^{2d}(x')=f_{0}^{2d}(y')$ for some
$(x',y')\in K_{(x,y)}$. This implies $F_{x}(x')=(f(x'),\ldots,f(T^{2d}x'))=(f(y'),\ldots,f(T^{2d}y'))=(F_{y}(y'))^{\oplus(2d+1)}$
which is a contradiction to property (\ref{enu:F(x)neqF(y)}).

Unlike the previous case which differs in its treatment from the corresponding
case in \cite[Theorem 8.1]{Gut12a}, the cases $(x,y)\in C_{1}$,
$(x,y)\in C_{2}$ follow quite straightforwardly. Indeed if $(x,y)\in C_{1}$
(both $x$ and $y$ are not periodic) and in addition the forward
orbits of $x$ and $y$ are disjoint then we can use almost verbatim
the case $(x,y)\in C_{3}$. The same is true if $(x,y)\in C_{1}$
and in addition $y$ belongs to the forward orbit of $x$, i.e. $y=T^{l}x$,
and $l>2d$. If $(x,y)\in C_{1}$, $y=T^{l}x$ and $l\leq2d$ then
one continues exactly as in Case 2 of \cite[Proposition  4.2]{Gut12a}.
For $(x,y)\in C_{2}$ one uses \cite[Theorem  4.1]{Gut12a}.

\bibliographystyle{alpha}
\bibliography{universal_bib}

\address{Yonatan Gutman, Institute of Mathematics, Polish Academy of Sciences,
ul. \'{S}niadeckich~8, 00-656 Warszawa, Poland.}

\textit{E-mail address}: \texttt{y.gutman@impan.pl}
\end{document}